\theoremstyle{plain}
\newtheorem{theorem}{Theorem}[section]
\newtheorem{proposition}[theorem]{Proposition}
\theoremstyle{definition}
\newtheorem{example}[theorem]{Example}
\theoremstyle{remark}
\numberwithin{equation}{section}
\theoremstyle{plain}
\newcommand{\erz}[1]{\langle#1\rangle}
\newcommand{\maxk}[1]{\left\{#1\right\}}
\DeclareMathOperator{\reg}{reg}
\DeclareMathOperator{\codim}{codim}
\DeclareMathOperator{\hilb}{Hilb}
\begin{document}
\title[Decomposition of semigroup algebras]{Decomposition of semigroup algebras}
\author{Janko B\"{o}hm}
\address{Department of Mathematics, University of Kaiserslautern, Erwin-Schr\"odinger-Str., 67663 Kaiserslautern, Germany}
\email{boehm@mathematik.uni-kl.de}
\author{David Eisenbud}
\address{Department of Mathematics, University of California, Berkeley, CA 94720, USA}
\email{de@msri.org}
\author{Max J. Nitsche}
\address{Max\mbox{\;}Planck\mbox{\;}Institute\mbox{\;}for\mbox{\;}Mathematics\mbox{\;}in\mbox{\;}the\mbox{\;}Sciences,\mbox{\;}Inselstrasse\mbox{\;}22,\mbox{\;}04103\mbox{\;}Leipzig,\mbox{\;}Germany}
\email{nitsche@mis.mpg.de}
\thanks{}
\date{\today}
\keywords{Semigroup rings, Castelnuovo-Mumford regularity, Eisenbud-Goto conjecture, computational commutative algebra.}
\subjclass[2010]{Primary 13D45; Secondary 13P99, 13H10.}

\begin{abstract}
Let $A\subseteq B$ be cancellative abelian semigroups, and let $R$ be an integral domain. We show that the semigroup ring $R[B]$ can be decomposed, 
as an $R[A]$-module, into a direct sum of $R[A]$-submodules of the quotient ring of $R[A]$. In the case of a finite extension of positive affine semigroup rings we obtain an algorithm computing the decomposition. When $R[A]$ is a polynomial ring over a field we explain how to compute many ring-theoretic properties of $R[B]$ 
in terms of this decomposition. In particular we obtain a fast algorithm to compute the Castelnuovo-Mumford regularity of homogeneous semigroup rings. As an application we confirm the Eisenbud-Goto conjecture in a range of new cases. Our algorithms are implemented in the \textsc{Macaulay2} package \textsc{MonomialAlgebras}.

\end{abstract}
\maketitle

\section{Introduction}

Let $A\subseteq B$ be cancellative abelian semigroups, and let $R$ be an integral domain. Denote by $G(B)$ the group generated by $B$, and by $R[B]$ the semigroup ring associated to $B$, that is, the free $R$-module with basis formed by the symbols $t^a$ for $a\in B$, and multiplication given by the $R$-bilinear extension of $t^a\cdot t^b=t^{a+b}$. 
Extending a result of
Hoa and St\"uckrad in \cite{HSCM}, we show that the semigroup ring $R[B]$ can be decomposed, 
as an $R[A]$-module, into a direct sum of $R[A]$-submodules of $R[G(A)]$ indexed by the elements of the factor group $G(B)/G(A)$.

By a \emph{positive affine semigroup} we mean a finitely generated subsemigroup $B\subseteq \mathbb N^m$, for some $m$.
If $A\subseteq B\subseteq  \mathbb N^m$ are positive affine semigroups,
$K$ is a field, and the positive rational cones $C(A)\subseteq C(B)$ spanned by $A$ and $B$ are equal, then
$K[B]$ is a finitely generated $K[A]$-module and we can make the decomposition above effective. In this case the number of submodules $I_g$ in the decomposition is finite, and we can choose them to be ideals of $K[A]$. We give an algorithm for computing the decomposition, implemented in our \textsc{Macaulay2} \cite{M2} package \textsc{MonomialAlgebras} \cite{BEN}. \\

By a \emph{simplicial semigroup}, we mean a positive affine semigroup $B$ such that $C(B)$ is a simplicial  cone.
If $B$ is simplicial and $A$ is a subsemigroup generated by elements on the extremal rays of $B$,
many ring-theoretic properties of $K[B]$ such as 
being Gorenstein, Cohen-Macaulay, Buchsbaum, normal, or seminormal, 
can be characterized in terms of the decomposition, see Proposition~\ref{char}. Using this we can provide functions to test those properties efficiently.\\

Recall that any positive affine semigroup $B$ has a unique minimal generating set
called its \emph{Hilbert basis} $\hilb(B)$. By a \emph{homogeneous semigroup} we mean
a positive affine semigroup that admits an $\mathbb N$-grading where all the elements of $\hilb(B)$ have degree 1.

One motivation for developing the decomposition algorithm was to have a more efficient
algorithm to compute the Castelnuovo-Mumford regularity (see Section~\ref{sec simplicial homogeneous case} for the definition) 
of a homogeneous semigroup ring $K[B]$. This invariant
 is often computed from a minimal graded free resolution of $K[B]$ as a module over a polynomial ring in $n$ variables,
  where $n$ is the cardinality of $\hilb(B)$. The free resolution could have length $n-1$, and if $n$ is large (say $n\geq15$) this computation becomes very slow. But in fact the Castelnuovo-Mumford regularity of $K[B]$ can be computed from 
a minimal graded free resolution of  $K[B]$ as a  module over any polynomial ring, so long as $K[B]$ is finitely generated.
For example, if $A$ is the subsemigroup generated by
elements of $\hilb(B)$ that lie on the extremal rays of $B$, 
and $K[B] \cong \oplus_{g}I_{g}$ is a decomposition  as graded $K[A]$-modules, then the regularity of $K[B]$ is the maximum of
the regularities of the $I_{g}$ as $K[A]$-modules (Proposition~\ref{regcomp}). Since the minimal graded free resolution of $I_g$
has length at most the cardinality of $\hilb(A)$ (equal to the dimension of $K[B]$ in the simplicial case),
and the decomposition can be obtained very efficiently, this method of computing the
regularity is typically much faster. See Section~\ref{sec simplicial homogeneous case} for timings. \\

The Eisenbud-Goto conjecture gives a bound on the Castelnuovo-Mumford regularity
\cite{EG}. It is known to hold in relatively few cases. The efficiency
of our algorithm allows us to test many new cases of the  conjecture (Proposition~\ref{testegbis5}).

\section{Decomposition}

If $X\subseteq G(B)$ we write $t^X:=\{t^x\mid x\in X\}$.

\begin{theorem}\label{decomposition} 

Let $A\subseteq B$ be cancellative abelian semigroups, and let $R$ be an integral domain. The $R[A]$-module $R[B]$ is isomorphic to the direct sum of submodules $I_{g}\subseteq R[G(A)]$ indexed by elements $g\in G:= G(B)/G(A)$.

\end{theorem}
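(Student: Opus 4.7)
The plan is to partition $B$ according to the cosets of $G(A)$ in $G(B)$, use this to write $R[B]$ as a direct sum of $R[A]$-submodules, and then translate each summand into $R[G(A)]$ by multiplying by a suitable monomial.

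First, I would exploit cancellativity of $A$ and $B$ to get the canonical embeddings $A\hookrightarrow G(A)\hookrightarrow G(B)$, from which $R[A]\hookrightarrow R[G(A)]\hookrightarrow R[G(B)]$ become inclusions of subrings, with the $R$-basis $\{t^x : x \in G(B)\}$ of $R[G(B)]$ being $R$-linearly independent by construction. For each coset $g\in G=G(B)/G(A)$, set
\[
B_{g}=\{\,b\in B \ :\ b+G(A)=g\,\}\subseteq B.
\]
Since the cosets partition $B$, writing $M_g$ for the $R$-span of $t^{B_g}$ inside $R[B]$ yields a direct sum decomposition $R[B]=\bigoplus_{g\in G}M_g$ as $R$-modules.

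Next I would verify that each $M_g$ is an $R[A]$-submodule. For $a\in A$ and $b\in B_g$, we have $a+b\in B$ with $(a+b)+G(A)=b+G(A)=g$, so $a+b\in B_g$ and hence $t^a\cdot t^b=t^{a+b}\in M_g$. Therefore the above is a decomposition of $R[A]$-modules.

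To exhibit each summand as a submodule of $R[G(A)]$, for every $g$ with $B_g\ne\emptyset$ I would pick a representative $b_g\in B_g$ and define the $R$-linear map
\[
\phi_g\colon M_g\longrightarrow R[G(A)],\qquad \phi_g(t^b)=t^{b-b_g}\quad(b\in B_g),
\]
which makes sense because $b-b_g\in G(A)$ by the definition of $B_g$. This is multiplication by the unit $t^{-b_g}$ inside the group algebra $R[G(B)]$, hence it is $R[A]$-linear ($\phi_g(t^a\cdot t^b)=t^{a+b-b_g}=t^a\cdot\phi_g(t^b)$) and injective (the $t^x$ are $R$-linearly independent in $R[G(B)]$). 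Define $I_g:=\phi_g(M_g)\subseteq R[G(A)]$, and set $I_g=0$ for those $g$ with $B_g=\emptyset$. Assembling the $\phi_g$ gives the desired isomorphism $R[B]\cong\bigoplus_{g\in G}I_g$ of $R[A]$-modules.

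The only conceptually delicate point is the very first step, namely that cancellativity is exactly what is needed to embed $A,B$ into $G(A),G(B)$ and to ensure that $R[G(A)]$ is genuinely a subring of $R[G(B)]$ containing all the elements $t^{b-b_g}$; once this is in hand, the remainder of the argument is a bookkeeping exercise about the coset partition, and the choice of representatives $b_g$ amounts to choosing the trivializations of the ``line bundles'' $M_g$ over $\operatorname{Spec} R[A]$.
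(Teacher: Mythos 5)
Your proposal is correct and follows essentially the same route as the paper's proof: partition $B$ by the cosets of $G(A)$ in $G(B)$, note that each piece is an $R[A]$-submodule of $R[B]$, and translate it into $R[G(A)]$ by a chosen coset representative. The only differences are cosmetic --- you spell out the $R[A]$-linearity and injectivity of the translation maps, treat empty cosets explicitly by setting $I_g=0$, and choose representatives in $B$ rather than merely in $G(B)$, none of which changes the argument.
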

\begin{proof} We think of an element $g\in G$ as a subset of $G(B)$.
For $g\in G$ let
\[
\Gamma_{g}':=\{b\in B\mid b\in g\}.
\]
By construction, we have
\[
R[B]=\bigoplus\nolimits_{g\in G} R\cdot t^{\Gamma_{g}'}.
\]
For each $g\in G$, choose a representative $h_{g}\in g\subseteq G(B)$. The module $R\cdot t^{\Gamma_{g}'}$ is an $R[A]$-submodule of $R[B]$ and, as such, it is isomorphic to
\[
I_{g}:=R\cdot\{t^{b-h_{g}}\mid b\in\Gamma_{g}'\}\subseteq R[G(A)]\text{.}
\]
\end{proof}

With notation as in the proof, we have 
$$
R[B] \cong_{R[A]} \bigoplus\nolimits_{g\in G}I_{g}\cdot t^{h_{g}}.
$$
This decomposition, together with the ring structure of $R[A]$ and the group structure of $G$
actually determines the ring structure of $R[B]$: if $x\in I_{g_{1}}$ and
$y\in I_{g_{2}}$ and $xy=z$ as elements of $R[G(A)]$ then as elements in the
decomposition of $R[B]$%
\[
x\cdot_{R[B]}y=\frac{t^{h_{g_{1}}}t^{h_{g_{2}}}}{t^{h_{g_{1}+g_{2}}}}z\in I_{g_{1}+g_{2}}.
\]
\noindent{\bf Henceforward we assume that  $A\subseteq B\subseteq \mathbb N^m$ are positive affine semigroups, and
we work with monomial algebras over a field $K$. }

The set 
$
B_A=\{x\in B\mid x \notin B+(A\setminus\{0\})\}
$
is the unique minimal subset of $B$ such that $t^{B_A}$ generates $K[B]$ as a $K[A]$-module. We define $\Gamma_g:=\{b\in B_A \mid b\in g\}$. Then $\Gamma_g + A = \Gamma_{g}'$.\\

We can compute the decomposition of Theorem~\ref{decomposition} if  
$K[B]$ is a finitely generated $K[A]$-module, or equivalently 
$B_{A}$ is a finite set. 
This finiteness (for positive affine semigroups $A\subseteq B$) 
is equivalent to the property $C(A)=C(B)$, where $C(X)$ denotes the positive rational cone spanned by $X$ in $\mathbb Q^m$. 
(Proof: if $C(A)\subsetneqq C(B)$ we can choose an element $x\in B$ on a ray of $C(B)$ not in $C(A)$, so $nx\in B_A$ for all $n\in\mathbb N^+$. Thus, $B_A$ is not finite. Conversely, if $C(A)=C(B)$, then for all $b\in B$ there exists  $n_{b}\in\mathbb N^+$ such that $n_bb\in A$. To generate
$K[B]$ as a $K[A]$-module, it suffices to take
all possible sums of the multiples $mb$ such that $m<n_{b}$ for all $b$ in a (finite) generating set for
the semigroup $B$.) Note that if $B_A$ is finite, then $G(B)/G(A)$ is also finite.\\ 

From these observations we obtain Algorithm~\ref{algo1} computing the set $B_A$ and the decomposition of $K[B]$.

\begin{algorithm}[H]           
\caption{Decompose monomial algebra}  
\begin{algorithmic}[1]\label{algo1}

\REQUIRE  A homogeneous ring homomorphism
\[
\psi:K[y_{1},\ldots,y_{d}] \rightarrow K[x_{1},\ldots,x_{n}]
\]
of $\mathbb N^m$-graded polynomial rings over a field $K$ with $\deg y_{i}=e_{i}$ and $\deg x_{j}=b_{j}$ such that $\psi(y_i)$ is a monomial for all $i$ and the gradings specify positive affine semigroups $A=\langle e_{1},\ldots,e_{d}\rangle\subseteq B=\left\langle b_{1},\ldots,b_{n}\right\rangle\subseteq \mathbb N^m$ with $C(A)=C(B)$.

\ENSURE An ideal $I_{g}\subseteq K[A]$ and a shift  $h_g\in G(B)$ for each
$g\in G:= G(B)/G(A)$ with
\[
K[B] \cong\bigoplus\nolimits_{g\in G} I_{g} (-h_{g})
\]
as $\mathbb Z^m$-graded $K[A]$-modules (with $\deg t^b = b$). 

\STATE Compute the set $B_{A} = \{b\in B \mid b\notin B+(A\setminus\{0\})\}$,
and let $\{v_{1},\dots, v_{r}\}$ be the monomials in $K[B]$ corresponding to elements of $B_{A}$. For 
example, this can be done by computing the toric ideal $I_B:=\ker \varphi$ associated to $B$, where 
\[
\varphi: K[x_{1},\ldots,x_{n}] \rightarrow K[B], \quad x_i\mapsto t^{b_i},
\]
and then computing a monomial $K$-basis $v_{1},\ldots,v_{r}$ of
\[
K[x_{1},\ldots,x_{n}]/(I_B+\psi(\langle y_{1},\ldots,y_{d}\rangle)).
\]

\STATE Partition the elements $v_{i}$ by their class modulo $G(A)$,
forming the decomposition
\[
B_A=
{\dot{\bigcup}}_{g\in G}\Gamma_g.
\]

\STATE For each $g\in G$, choose a representative $\bar g \in \Gamma_{g}$. 

\STATE For each $v\in \Gamma_{g}$, choose $c_{v,j}\in\mathbb Z$ such that
\[
v =\bar g+\sum\nolimits_{j=1}^d c_{v,j}e_j.
\]

\STATE Let $\bar c_{g,j} := \min\{ c_{v,j} \mid v\in \Gamma_{g}\}$. 

\RETURN
$$
\biggl \{ h_{g}:=\bar g+\sum\nolimits_{j=1}^d \bar c_{g,j}e_j,\ \ 
I_{g}:=K[A]\{t^{v-h_{g}}\mid v\in \Gamma_{g}\}
\mid g\in G\biggr\}
$$
\end{algorithmic}
\end{algorithm}

For $v\in \Gamma_{g}$ the element $t^{v-h_{g}}$ is in $K[A]$ because
\[
v-h_{g}= \sum\nolimits_{j=1}^{d}\left(  c_{v,j}-\bar c_{g,j}\right)  e_{j}
\]
is an expression with non-negative integer coefficients. Thus, $I_{g}$ is a monomial ideal of $K[A]$ and  $h_{g}\in G(B)$ for each $g\in G$, as required.

\begin{example}
\label{zerlbeispiel}

Consider $B=\langle(2,0,3),(4,0,1),(0,2,3),(1,3,1),(1,2,2)\rangle
\subset\mathbb{N}^{3}$ and the subsemigroup $A=\langle
(2,0,3),(4,0,1),(0,2,3),(1,3,1)\rangle$. We get the decomposition of $B_{A}$
into equivalence classes $B_{A}=\{0,(2,4,4)\}\cup\{(1,2,2),(3,6,6)\}$.
Choosing shifts $h_{1}=(-2,0,-3)$ and $h_{2}=(-1,2,-1)$ in $G(B)$ we have
\begin{align*}
K[B] &  \cong K[A]\{t^{(2,0,3)},t^{(4,4,7)}\}(-h_{1})\oplus K[A]\{t^{(2,0,3)}%
,t^{(4,4,7)}\}(-h_{2})\\
&  \cong\langle x_{0},x_{1}x_{2}^{2}\rangle(-h_{1})\oplus\langle x_{0},x_{1}x_{2}^{2}\rangle(-h_{2}),
\end{align*}
where $K[A]\cong K[x_{0},x_{1},x_{2},x_{3}]/\langle x_{1}^{2}x_{2}^{3}-x_{0}^{3}x_{3}^{2} \rangle$.
\end{example}

\begin{example}\label{zerlbeispiel2}
Using our implementation of Algorithm $1$ in the \textsc{Macaulay2} package
\textsc{MonomialAlgebras} we compute the decomposition of $\mathbb Q[B]$ over $\mathbb Q[A]$
in case of Example~\ref{zerlbeispiel}:

\texttt{i1: loadPackage "MonomialAlgebras";}

\texttt{i2: A = \{\{2,0,3\},\{4,0,1\},\{0,2,3\},\{1,3,1\}\};}

\texttt{i3: B = \{\{2,0,3\},\{4,0,1\},\{0,2,3\},\{1,3,1\},\{1,2,2\}\};}

\texttt{i4: S = QQ[x\_0 .. x\_4, Degrees=%
$>$%
B];}

\texttt{i5: P = QQ[x\_0 .. x\_3, Degrees=%
$>$%
A];}

\texttt{i6: f = map(S,P);}

\texttt{i7: dc = decomposeMonomialAlgebra f}

\texttt{
\begin{tabular}
[c]{llll}%
\hspace{-0.15in}\texttt{o7: HashTable\{} & \{0,0,0\} & \texttt{=%
$>$
\{ }ideal ( x$_{0},$ x$_{1}$x$_{2}^{2}$ ), & \{-2,0,-3\} \}\\
& \{5,0,0\} & \texttt{=%
$>$
\{ }ideal ( x$_{0},$ x$_{1}$x$_{2}^{2}$ ), & \{-1,2,-1\} \}\}
\end{tabular}
}

\texttt{i8: ring first first values dc}

\texttt{o8: $\frac{\mbox{\texttt{P}}}{\mbox{\texttt{x}}_{1}^{2}\mbox{\texttt{x}}_{2}^{3}-\mbox{\texttt{x}}_{0}^{3}\mbox{\texttt{x}}_{3}^{2}}$}

The keys of the hash table represent the elements of $G$.
\end{example}

\section{Ring-theoretic properties}\label{sec simplicial case}

Recall that a positive affine semigroup $B$ is simplicial if it spans a simplicial cone, or equivalently, there are linearly independent elements $e_1,\ldots,e_d\in B$ with $C(B)=C(\{e_1,\ldots,e_d\})$. Many ring-theoretic properties of semigroup algebras can be determined from the 
combinatorics of the semigroup; see \cite{GSRBB, MHCM, HRLC, PHDPL, RSHFGA}. Here we give characterizations in terms of the decomposition of Theorem~\ref{decomposition}.

\begin{proposition}\label{char}

Let $K$ be a field, $B\subseteq \mathbb N^m$ a simplicial semigroup, and let $A$ be the submonoid of $B$ which is generated by linearly independent elements $e_1,\ldots,e_d$ of $B$ with \mbox{$C(A)=C(B)$}. Let $B_A$ be as above, and $K[B]\cong\bigoplus_{g\in G} I_{g}(-h_g)$ be the output of Algorithm~\ref{algo1} with respect to $A\subseteq B$ using
minimal generators of $A$. We have:

\begin{enumerate}

\item The depth of $K[B]$ is the minimum of the depths of the ideals $I_g$.

\item $K[B]$ is Cohen-Macaulay if and only if every ideal $I_g$ is equal to $K[A]$.

\item $K[B]$ is Gorenstein if and only if $K[B]$ is Cohen-Macaulay and the set of shifts $\{h_g\}_{g\in G}$ has exactly one maximal element with respect to $\leq$ given by $x\leq y$ if there is an element $z\in B$ such that $x+z=y$.

\item $K[B]$ is Buchsbaum if and only if each  ideal $I_g$ is either equal to $K[A]$, or to
 the homogeneous maximal ideal of $K[A]$ and $h_g+b\in B$ for all $b\in \hilb(B)$. 

\item $K[B]$ is normal if and only if for every element $x$ in $B_{A}$ there exist $\lambda_{1},\ldots,\lambda_{d}\in\mathbb{Q}$ with $0\leq\lambda_{i}<1$ for all $i$ such that $x=\sum_{i=1}^{d}\lambda_{i}e_{i}$.

\item $K[B]$ is seminormal if and only if for every element $x$ in $B_{A}$ there exist $\lambda_{1},\ldots,\lambda_{d}\in\mathbb{Q}$ with $0\leq\lambda_{i}\leq1$ for all $i$ such that $x=\sum_{i=1}^{d}\lambda_{i}e_{i}$.

\end{enumerate}

\end{proposition}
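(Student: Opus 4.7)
My plan is to reduce each of the six properties to a $K[A]$-module statement about the decomposition $K[B]\cong\bigoplus_{g\in G}I_g(-h_g)$ produced by Algorithm~\ref{algo1}, using that $K[A]=K[t^{e_1},\dots,t^{e_d}]$ is a polynomial ring of Krull dimension $d=\dim K[B]$ (the $e_i$ being linearly independent) and that the relevant invariants of $K[B]$ agree when computed over either $K[A]$ or $K[B]$.

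Part (1) is immediate: depth is insensitive to grading shifts and additive over finite direct sums by taking the minimum. For part (2), Auslander--Buchsbaum forces $\operatorname{depth}(I_g)=d$ to mean that $I_g$ is free of rank one, hence principal; the minimum-shift convention $\bar c_{g,j}=\min_v c_{v,j}$ in Algorithm~\ref{algo1} then forces a principal $I_g$ to equal $K[A]$, since if $|\Gamma_g|\geq 2$ any two distinct $B_A$-representatives in the same $G(A)$-coset produce incomparable monomial generators of $I_g$ (they cannot differ by an element of $A\setminus\{0\}$ by minimality of $B_A$).

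For part (3), assuming CM we have $K[B]=\bigoplus_g K[A]\cdot t^{h_g}\subseteq K[G(B)]$ and $\omega_{K[B]}=\operatorname{Hom}_{K[A]}(K[B],\omega_{K[A]})\cong\bigoplus_g K[A](h_g-\sigma)$, which embeds as $\bigoplus_g t^{\sigma-h_g}K[A]$ inside $K[G(B)]$ with $\sigma=e_1+\dots+e_d$. Gorensteinness means $\omega_{K[B]}=t^\alpha K[B]=\bigoplus_g t^{\alpha+h_g}K[A]$ for some $\alpha\in G(B)$, and matching coset-by-coset produces an involution $\pi$ of $G$ and $\tau\in G(B)$ with $h_g+h_{\pi(g)}=\tau$ for all $g$. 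Since $h_0=0$ (the identity is the unique $B_A$-representative of the identity coset), taking $g=0$ gives $\tau=h_{g_0}$ for $g_0=\pi(0)$, and then $h_{g_0}-h_g=h_{\pi(g)}\in B$ identifies $h_{g_0}$ as the unique maximum. Conversely, given a unique maximum $h_{g_0}$, the element $h_{g_0}-h_g\in B$ lies in coset $g_0-g$, giving $h_{g_0-g}\leq h_{g_0}-h_g$; since $h_g+h_{g_0-g}\in B$ is a coset-$g_0$ element and hence at least $h_{g_0}$, the reverse inequality also holds, and $A\cap(-A)=\{0\}$ forces $h_{g_0}-h_g=h_{g_0-g}$, the Gorenstein symmetry. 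For part (4), the local-cohomology criterion $\mathfrak m\cdot H^i_\mathfrak m(K[B])=0$ for $i<d$ applied to $H^i_\mathfrak m(K[B])=\bigoplus_g H^i_\mathfrak m(I_g)(-h_g)$ forces $\mathfrak m_A\cdot H^i_\mathfrak m(I_g)=0$ for each non-free $I_g$; for a monomial ideal of $K[A]$ this pins $I_g$ down to $\mathfrak m_A$. The $K[B]$-action on the one-dimensional socle of $H^1_\mathfrak m(\mathfrak m_A)(-h_g)$ then yields the compatibility condition $h_g+b\in B$ for every $b\in\hilb(B)$. The main obstacle is this Buchsbaum bookkeeping, tracing carefully how the $K[B]$-multiplication on local cohomology interacts with the shifts $h_g$ and the generators of $B$.

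Finally, parts (5) and (6) follow by translating the classical fundamental-parallelepiped criterion. Normality amounts to $B=C(B)\cap G(B)$, which via the unique decomposition $y=y_0+a$ with $y_0\in\Pi:=\{\sum\lambda_ie_i:0\leq\lambda_i<1\}$ and $a\in A$ reduces to $\Pi\cap G(B)\subseteq B$. If $B$ is normal and $x\in B_A$ had some $\lambda_i\geq 1$, then $x-e_i\in\bar B=B$ would contradict $x\notin B+(A\setminus\{0\})$; conversely, if $B_A\subseteq\Pi$ then $B_A$ meets every $G(A)$-coset in $G(B)$, so $B_A=\Pi\cap G(B)$ and $B=\bar B$. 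The seminormal case is parallel, using the closed parallelepiped and the Reid--Roberts--Vitulli criterion that seminormality of $B$ amounts to $B_A\subseteq\{\sum\lambda_ie_i:0\leq\lambda_i\leq 1\}$.
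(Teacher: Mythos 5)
Parts (1), (2) and (5) of your proposal are fine and essentially the paper's arguments, and your canonical-module treatment of (3) is a correct, self-contained alternative to the paper's citation of Stanley (the symmetry $h_g+h_{\pi(g)}=h_{g_0}$ argument goes through, using that Cohen--Macaulayness forces $\#\Gamma_g=1$ and $h_0=0$). The genuine problems are in (4) and (6).

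For (4), your mechanism is the annihilation condition $\mathfrak m\cdot H^i_{\mathfrak m}(K[B])=0$ for $i<d$. That condition characterizes quasi-Buchsbaumness, not Buchsbaumness, and here the gap is not cosmetic: once each $I_g$ is $K[A]$ or $\mathfrak m_A$, one has $H^1_{\mathfrak m}(K[B])\cong\bigoplus_{g\in G'}K(-h_g)$ with each summand concentrated in the single multidegree $h_g$, so multiplication by $t^b$ maps the degree-$h_g$ piece into the degree-$(h_g+b)$ piece of $H^1$, which is zero unless $h_g+b$ happens to coincide with some $h_{g'}$. Hence annihilation by $\mathfrak m_{K[B]}$ is (essentially) automatic and cannot produce the condition $h_g+b\in B$ for all $b\in\hilb(B)$ in the forward direction, nor can it certify Buchsbaumness in the backward direction; one needs a genuine Buchsbaum criterion (surjectivity of $\operatorname{Ext}^i_R(K,K[B])\to H^i_{\mathfrak m}(K[B])$, or a system-of-parameters argument), and the precise interaction of the $K[B]$-structure with the shifts is exactly the content you flag as ``the main obstacle'' and leave undone. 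The paper avoids all of this by translating ``$I_g=\mathfrak m_A$ and $h_g+b\in B$ for all $b\in\hilb(B)$'' into ``$\Gamma_g=\{m+e_1,\dots,m+e_d\}$ with $m+b\in B$ for all $b\in\hilb(B)$'' and invoking the combinatorial characterization of Buchsbaum simplicial semigroups of Garc\'ia-S\'anchez and Rosales; if you want a self-contained proof you must supply that criterion yourself. (A smaller point: your step ``$\mathfrak m_A H^i(I_g)=0$ pins a monomial ideal down to $\mathfrak m_A$'' is only true because the minimum-shift normalization makes the generators of $I_g$ have gcd $1$; e.g.\ $x\mathfrak m_A$ also has all intermediate local cohomology killed by $\mathfrak m_A$.)

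For (6), you invoke a ``criterion that seminormality of $B$ amounts to $B_A\subseteq\{\sum\lambda_ie_i:0\le\lambda_i\le1\}$'', but stated for the given $A$ this is literally assertion (6), so the argument is circular as written. The citable results (Hochster--Roberts, Li) give this equivalence only when $e_1,\dots,e_d$ are minimal generators of $B$, i.e.\ lie in $\hilb(B)$, whereas the proposition allows any linearly independent $e_i\in B$ spanning the cone (the paper even decomposes over $A=\langle(2\alpha,0),(0,2\alpha)\rangle$ in an example). The substantive part of the paper's proof of (6) is precisely the reduction handling $e_k\notin\hilb(B)$, replacing $e_k=e_k'+e_k''$ and showing $B_A\subseteq\bar D_A$ is equivalent to $B_{A'}\subseteq\bar D_{A'}$; your proposal omits this step entirely. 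Note that your argument for (5) does not suffer from this issue, since Hochster's criterion $B=C(B)\cap G(B)$ makes no reference to the choice of the $e_i$.
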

\begin{proof}

For every $x\in G(B)$ there are uniquely determined elements $\lambda_{1}^{x},\ldots,\lambda_{d}^{x}
\in\mathbb{Q}$ such that $x=\sum_{j=1}^{d}\lambda_{j}^{x}e_{j}$. Then by construction
\[
h_{g}=\sum\nolimits_{j=1}^{d}\min\left\{ \lambda^{v}_{j}\mid v\in\Gamma_{g}\right\} e_{j}.
\]
Assertion (1) and (2) follow immediately; (2) was already mentioned in \cite[Theorem\,6.4]{RSHFGA}. Assertion (3) can be found in \cite[Corollary\,6.5]{RSHFGA}.\smallskip\newline
(4) Let $I_g$ be a proper ideal, equivalently, $\#\Gamma_g\geq2$. The ideal $I_g$ is equal to the homogeneous maximal ideal of $K[A]$ and $h_g+b\in B$ for all $b\in \hilb(B)$ if and only if $\Gamma_g=\{m+e_{1},\ldots,m+e_{d}\}$ for some $m$ with $m+b\in B$ for all $b\in \hilb(B)$. Now the assertion follows from \cite[Theorem\,9]{GSRBB}.\smallskip\newline
(5) We set $D_{A}=\{x\in G\left(  B\right)  \mid x=\sum_{i=1}^{d}\lambda_{i}%
e_{i},\lambda_{i}\in\mathbb{Q}$ and $0\leq\lambda_{i}<1~\forall i\}$. The ring $K[B]$ is normal if and only if $B=C(B)\cap G(B)$ by \cite[Proposition\,1]{MHCM}. We need to show that $C(B)\cap G(B)\subseteq B$ if and only if $B_A\subseteq D_A$. We have $B_{A}\subseteq D_{A}$ if and only if $D_{A}\subseteq B_{A}$, since $B_{A}$ has $\#G=\#D_{A}$ equivalence classes and by definition of $B_{A}$. Note that $D_{A}\subseteq C(B)\cap G(B)$ and $D_{A}\cap B\subseteq B_{A}$. The assertion follows from the fact that every element $x\in C(B)\cap G(B)$ can be written as $x=x^{\prime}+\sum_{i=1}^{d}n_{i}e_{i}$ for some $x^{\prime}\in D_{A}$ and $n_{i}\in\mathbb{N}$.\smallskip\newline
(6) We set $\bar D_{A}:=\{x\in B \mid x =\sum_{i=1}^{d} \lambda_{i}e_{i}, \lambda_{i}\in\mathbb{Q }\mbox{ and }0\leq\lambda_{i}\leq1~\forall i\}$. By \cite[Proposition\,5.32]{HRLC} and \cite[Theorem\,4.1.1]{PHDPL} $K[B]$ is seminormal if and only if $B_A\subseteq\bar D_A$, provided that ${e_{1},\ldots,e_{d}}\in\hilb(B)$. Otherwise there is a $k\in\{1,\ldots,d\}$ with $e_{k}=e_{k}^{\prime}+e_{k}^{\prime\prime}$ and $e_{k}^{\prime},e_{k}^{\prime\prime}\in B\setminus\{0\}$. We set $A^{\prime}=\langle e_{1},\ldots,e_{k}^{\prime},\ldots,e_{d}\rangle$ and $A^{\prime\prime}=\langle e_{1},\ldots,e_{k}^{\prime\prime},\ldots,e_{d}\rangle$. Clearly $C(A)=C(A^{\prime})=C(A^{\prime\prime})$. We need to show that $B_A\subseteq \bar D_A$ if and only if $B_{A'}\subseteq \bar D_{A'}$.  Let $x\in B_{A}\setminus\bar D_{A}$. If $x-e_{k}^{\prime}\notin B$, then $x\in B_{A^{\prime}}\setminus\bar D_{A^{\prime}}$. If $x-e_{k}^{\prime}\in B$, then $x-e_{k}^{\prime}\in B_{A^{\prime\prime}}\setminus\bar D_{A^{\prime\prime}}$. Let $x\in B_{A'}\setminus \bar D_{A'}$, say $x=\sum _{j\not=k}\lambda_je_j+\lambda_ke_k'$ and $\lambda_j>1$ for some $j$. If $j\not=k$, then $x\in B_A\setminus \bar D_A$. Let $j=k$; consider the element $y=x+e_{k}''-\sum_{j\not=k}n_je_j\in B$ for some $n_j\in \mathbb N$ such that $\sum_{j\not=k} n_j$ is maximal. It follows that $y\in B_A\setminus\bar D_A$ and we are done.
\end{proof}

Note that normality of positive affine semigroup rings can also be tested using the implementation of normalization in the program \textsc{Normaliz} \cite{normaliz}. We remark that from Proposition~\ref{char} it follows that every simplicial affine semigroup ring $K[B]$ which is seminormal and Buchsbaum is also Cohen-Macaulay. This holds more generally for arbitrary positive affine semigroups by \cite[Proposition\,4.15]{BLRSN}.

\begin{example}[Smooth Rational Monomial Curves in $\mathbb{P}^{3}$]

Consider the simplicial semigroup $B=\langle(\alpha,0), (\alpha-1,1), (1,\alpha-1), (0,\alpha)\rangle\subseteq \mathbb N^2$ and set $A=\langle(\alpha,0),(0,\alpha)\rangle$, say $K[A]=K[x,y]$. Note that we have $\alpha$ equivalence classes. We get
\[
K[B]\cong K[x,y]^{3}\oplus\langle x^{\alpha-3},y\rangle\oplus\langle
x^{\alpha-4},y^{2}\rangle\oplus\ldots\oplus\langle x,y^{\alpha-3}\rangle,
\]
as $K[x,y]$-modules, where the shifts are omitted. In the decomposition each ideal of the form $\langle x^{i},y^{j}\rangle$, $1\leq i,j\leq\alpha-3$ with $i+j=\alpha-2$ appears exactly once. Hence $K[B]$ is not Buchsbaum for $\alpha>4$, since $\langle x^{\alpha-3},y\rangle$ is a direct summand. In case $\alpha=4$ there is only one proper ideal $I_{4}=\langle x,y\rangle$ and $h_{4}=(2,2)$; in fact $(2,2)+\hilb(B)\subseteq B$ and therefore $K[B]$ is Buchsbaum. It follows immediately that $K[B]$ is Cohen-Macaulay for $\alpha\leq3$, Gorenstein for $\alpha\leq2$, seminormal for $\alpha\leq3$, and normal for $\alpha\leq3$. Note that we could also decompose $K[B]$ over the subring $K[A]$, where $A=\langle(2\alpha,0),(0,2\alpha)\rangle=K[x',y']$, for $\alpha=4$ we would get
\[
K[B]\cong K[x',y']^{15} \oplus\langle x',y'\rangle
\]
and the corresponding shift of $\langle x',y'\rangle$ is again $(2,2)$.
\end{example}

\begin{example}
Let $B=\langle(1,0,0),(0,1,0),(0,0,2),(1,0,1),(0,1,1)\rangle\subset\mathbb N^3$, moreover, let \mbox{$A=\langle(1,0,0),(0,1,0),(0,0,2)\rangle$}, say $K[A]=K[x,y,z]$. This example was given in \cite[Example\,6.0.2]{PHDPL} to study the relation between seminormality and the Buchsbaum property. We have
\[
K[B]\cong K[A]\oplus\langle x,y\rangle(-(0,0,1)),
\]
as $\mathbb{Z}^{3}$-graded $K[A]$-modules. Hence $K[B]$ is not Buchsbaum,
since $\langle x,y\rangle$ is not maximal; moreover, $K[B]$ is seminormal, but not normal. 

\end{example}

\begin{example}
\label{ex gorenstein}
Consider the semigroup $B=\langle(1,0,0),(0,2,0),(0,0,2),(1,0,1),(0,1,1)\rangle\subset\mathbb N^3$, and set $A=\langle(1,0,0),(0,2,0),(0,0,2)\rangle$. We get
\[
K[B]\cong K[A]\oplus K[A](-(1,0,1))\oplus K[A](-(0,1,1))\oplus K[A](-(1,1,2)).
\]
Hence $K[B]$ is Gorenstein, since $(1,0,1)+(0,1,1)=(1,1,2)$. Moreover, $K[B]$ is not normal, since $(1,0,1)=(1,0,0)+\frac{1}{2}(0,0,2)$, but seminormal.

\end{example}

\begin{example}
We illustrate our implementation of the characterizations given in Proposition~\ref{char} at Example \ref{ex gorenstein}:

\texttt{i1: B = \{\{1,0,0\},\{0,2,0\},\{0,0,2\},\{1,0,1\},\{0,1,1\}\};}

\texttt{i2: isGorensteinMA B}

\texttt{o2: true}

\texttt{i3: isNormalMA B}

\texttt{o3: false}

\texttt{i4: isSeminormalMA B}

\texttt{o4: true}

\noindent Note that there are also commands \texttt{isCohenMacaulayMA} and \texttt{isBuchsbaumMA} available testing the Cohen-Macaulay and the Buchsbaum property, respectively.

\end{example}

\section{Regularity\label{sec simplicial homogeneous case}}

Let $K$ be a field and let $R=K[x_{1},\ldots,x_{n}]$ be a standard graded
polynomial ring, that is, \mbox{$\deg x_i=1$} for all $i=1,\ldots,n$. Let $R_{+}$ be the homogeneous maximal ideal of $R$, and let $M$ be a finitely
generated graded $R$-module. We define the \emph{Castelnuovo-Mumford regularity}
$\reg M$ of $M$ by
\[
\reg M:=\max\left\{  a(H_{R_{+}}^{i}(M))+i\mid i\geq0\right\}  ,
\]
where $a(H_{R_{+}}^{i}(M)):=\max\left\{  n\mid\lbrack H_{R_{+}}^{i}%
(M)]_{n}\not =0\right\}  $ and $a(0)=-\infty$; $H_{R_{+}}^{i}(M)$ denotes the
$i$-th local cohomology module of $M$ with respect to $R_{+}$. Note that
$\reg M$ can also be computed in terms of the shifts in a minimal
graded free resolution of $M$. An important application of the regularity is
that it bounds the degrees in certain minimal Gr\"{o}bner bases
by \cite{BSCDM}. Thus, it is of interest to compute or bound the regularity of
a homogeneous ideal. The following conjecture (Eisenbud-Goto) was made in
\cite{EG}: If $K$ is algebraically closed and $I$ is a homogeneous prime ideal
of $R$ then for $S=R/I$%
\[
\reg S\leq\deg S-\codim S.
\]
Here $\deg S$ denotes the degree of $S$ and $\codim %
S:=\dim_{K} S _{1}-\dim S$ the codimension. The conjecture has been proved
for  dimension $2$ by Gruson, Lazarsfeld, and Peskine
\cite{GLP}; for the Buchsbaum case by St\"{u}ckrad and Vogel \cite{EGBB} (see also
\cite{EGCM}); for $\deg S\leq\codim S+2$ by Hoa, St\"{u}ckrad,
and Vogel \cite{HSVC2}; and in characteristic zero for smooth surfaces and 
certain smooth threefolds by Lazarsfeld \cite{LSMSD3} and Ran \cite{RLDG}. There is also a stronger version in which $S$ is
only required to be reduced and connected in codimension $1$; this version has
been proved in dimension 2 by Giaimo in \cite{DGEGCC}. For homogeneous
semigroup rings of codimension $2$ the conjecture was proved by Peeva
and Sturmfels \cite{codim2}. Even in the simplicial setting the conjecture is largely open,
though it was proved
for the isolated singularity case
 by Herzog and Hibi \cite{CMHH}; for the
seminormal case by \cite{MNSNAX}; and for a few other cases by
\cite{HSCM, MNEGMC}.\newline

We now focus on computing the regularity of a homogeneous semigroup ring $K[B]$. Note that a positive affine semigroup $B$ is homogeneous if and only if there is
a group homomorphism $\deg : G(B)\rightarrow\mathbb{Z}$ with
$\deg b=1$ for all $b\in \hilb(B)$. We always consider the $R$-module
structure on $K[B]$ given by the homogeneous surjective $K$-algebra
homomorphism $R\twoheadrightarrow K[B], x_{i}\mapsto t^{b_{i}}$, where $\hilb(B)=\{b_1,\ldots,b_n\}$. Generalizing the results from \cite{HSCM}, the regularity can be computed in terms of the decomposition of Theorem~\ref{decomposition} as follows: 

\begin{proposition}
\label{regcomp}

Let $K$ be an arbitrary field and let $B\subseteq\mathbb{N}^{m}$ be a
homogeneous  semigroup. Fix a group homomorphism $\deg:
G(B)\rightarrow\mathbb{Z}$ with $\deg b=1$ for all $b\in
\hilb(B)$. Moreover, let $A$ be a submonoid of $B$ with $\hilb(A)=\{e_{1},\ldots,e_{d}\}$, $\deg e_{i}=1$ for all $i$, and
$C(A)=C(B)$. Let $K[B]\cong\bigoplus_{g\in G} I_{g} (-h_{g})$ be the output of
Algorithm~\ref{algo1} with respect to $A\subseteq B$. Then

\begin{enumerate}
\item $\reg K[B] = \max\left\{ \reg I_{g} + \deg h_{g}\mid g\in G\right\} $; where $\reg I_{g}$ denotes the
regularity of the ideal $I_{g}\subseteq K[A]$ with respect to the canonical $K[x_{1},\ldots,x_{d}]$-module structure.

\item $\deg K[B] = \#G \cdot\deg K[A]$.
\end{enumerate}
\end{proposition}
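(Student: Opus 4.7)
The central idea for part (1) is to compute $\reg K[B]$ using the polynomial ring $R' = K[y_1,\ldots,y_d]$ acting on $K[B]$ through $R' \twoheadrightarrow K[A] \hookrightarrow K[B]$, in place of the standard polynomial ring $R = K[x_1,\ldots,x_n]$ mapping to $K[B]$ by $x_i \mapsto t^{b_i}$. This rests on the invariance principle that the Castelnuovo-Mumford regularity of a finitely generated graded module is intrinsic, independent of which standard graded polynomial ring is chosen to act on it.

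To justify this invariance in our setting, I would use the local cohomology definition of regularity and the fact that $H^i_{\mathfrak{a}}(-)$ depends only on the radical of $\mathfrak{a}\cdot K[B]$ inside $K[B]$. Both $R_+ \cdot K[B]$ and $R'_+ \cdot K[B]$ have the same radical, namely the graded maximal ideal of $K[B]$: the hypothesis $C(A)=C(B)$ guarantees that each $b \in \hilb(B)$ has a positive multiple $n_b b$ lying in $A$, so $(t^b)^{n_b} \in R'_+ \cdot K[B]$. Consequently $H^i_{R_+}(K[B]) = H^i_{R'_+}(K[B])$ as graded modules, and $\reg_R K[B] = \reg_{R'} K[B]$. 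Part (1) then follows at once from the decomposition $K[B] \cong \bigoplus_{g\in G} I_g(-h_g)$ of graded $R'$-modules: applying $\deg$ converts the $\mathbb{Z}^m$-graded shift $-h_g$ into the $\mathbb{Z}$-graded shift $-\deg h_g$, and additivity of regularity over finite direct sums combined with $\reg N(-k) = \reg N + k$ yields $\reg K[B] = \max_{g\in G}(\reg I_g + \deg h_g)$.

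For part (2) the same change of ambient polynomial ring preserves the degree, since multiplicity is read off the leading coefficient of the Hilbert polynomial and therefore depends only on the graded Hilbert series. From the decomposition, $\deg K[B] = \sum_{g\in G}\deg I_g(-h_g) = \sum_{g\in G}\deg I_g$; and since each $I_g$ is a nonzero homogeneous ideal in the $d$-dimensional domain $K[A]$, $\deg I_g = \deg K[A]$. This yields $\deg K[B] = \#G \cdot \deg K[A]$. The main obstacle in the argument is the invariance of regularity under change of ambient polynomial ring; once that is established, both parts reduce to routine additivity properties of regularity and multiplicity under direct sums, shifts, and passage to full-dimensional ideals in a domain.
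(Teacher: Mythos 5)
Your proposal is correct and follows essentially the same route as the paper: the paper also changes the ambient polynomial ring from $K[x_1,\ldots,x_n]$ to the $d$-variable ring mapping onto $K[A]$, justifying this via the (graded) independence of local cohomology — it cites the Graded Independence Theorem \cite[Theorem 13.1.6]{BSLC} together with $C(A)=C(B)$, which is exactly your radical argument — and then reads off (1) from the $\mathbb{Z}$-graded decomposition $K[B]\cong\bigoplus_{g} I_g(-\deg h_g)$ and (2) from $\deg I_g=\deg K[A]$. Your write-up merely supplies slightly more detail for these last steps than the paper does.
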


\begin{proof}
(1) Consider the $T=K[x_{1},\ldots,x_{d}]$-module structure on $K[B]$ which is
given by $T\twoheadrightarrow K[A]\subseteq K[B]$,  $x_{i}\mapsto t^{e_{i}}$. Since $C(A)=C(B)$ we get by \cite[Theorem\,13.1.6]{BSLC}
\[
H^{i}_{K[B]_{+}}(K[B])\cong H^{i}_{T_{+}}(K[B]),
\]
as $\mathbb{Z}$-graded $T$-modules (where $K[B]_{+}$ is the homogeneous maximal ideal of $K[B]$). By the same theorem we obtain $H^{i}_{K[B]_{+}}(K[B])\cong H^{i}_{R_{+}}(K[B])$. Then the assertion follows from $K[B]\cong\bigoplus_{g\in G} I_{g}(-\deg h_{g})$ as $\mathbb{Z}$-graded $T$-modules.

(2) Follows from $\deg I_{g}=\deg K[A]$ for all $g\in G$.
\end{proof}

Using Proposition~\ref{regcomp} we obtain Algorithm~\ref{algo2}.

\begin{algorithm}[H]
\caption{The regularity algorithm}
\label{algo2}
\begin{algorithmic}[1]
\REQUIRE The Hilbert basis $\hilb(B)$ of a homogeneous  semigroup $B\subseteq \mathbb N^m$ and a field $K$.
\ENSURE The Castelnuovo-Mumford regularity $\reg K[B]$.
\STATE Choose a minimal subset $\{e_1,\ldots,e_d\}$ of $\hilb(B)$ with $C(\{e_1,\ldots,e_d\})=C(B)$, and set $A=\erz{e_1,\ldots,e_d}$.
\STATE Compute the decomposition $K[B]\cong\bigoplus_{g\in G} I_g(-h_g)$ over $K[A]$ by Algorithm~\ref{algo1}.
\STATE Compute a hyperplane $H=\{(t_1,\ldots,t_m)\in\mathbb R^m \mid \sum_{j=1}^m a_jt_j =c\}$ with $c\not=0$ such that $\hilb(B)\subseteq H$. Define $\deg :\mathbb R^m\rightarrow\mathbb R$ by $\deg (t_1,\ldots,t_m) = (\sum_{j=1}^m a_jt_j)/c$.
\RETURN $\reg K[B] = \max\maxk{\reg I_g + \deg h_g\mid g\in G}$.
\end{algorithmic}
\end{algorithm}

By Algorithm~\ref{algo2} the computation of $\reg K[B]$ reduces to
computing minimal graded free resolutions of the monomial ideals $I_{g}$ in
$K[A]$ as $K[x_{1},\ldots,x_{d}]$-modules.

\begin{example}
We apply Algorithm \ref{algo2} using the decomposition computed in
Example~\ref{zerlbeispiel2}. A resolution of $I=\left\langle x_{0},x_{1}%
x_{2}^{2}\right\rangle $ as a $T=\mathbb Q[x_{0},x_{1},x_{2},x_{3}]
$-module is%
\[
0\longrightarrow T\left( -4\right) \oplus T\left( -5\right) \overset
{d}{\longrightarrow}T\left( -1\right) \oplus T\left( -3\right)
\longrightarrow I\longrightarrow0
\]
with%
\[
d=\left(
\begin{array}
[c]{cc}%
x_{1}x_{2}^{2} & x_{0}^{2}x_{3}^{2}\\
-x_{0} & -x_{1}x_{2}%
\end{array}
\right) ,
\]
hence $\reg I=4$. The group homomorphism is given by $\deg b=(b_1+b_2+b_3)/5$ and therefore $\reg \mathbb Q[B]=\max\left\{4-1,4-0\right\} =4$.
\end{example}

With respect to timings, we first focus on dimension $3$ comparing our
implementation of Algorithm~\ref{algo2} in the \textsc{Macaulay2} package
\textsc{MonomialAlgebras} (marked in the tables by MA) with other methods.
Here we consider the computation of the regularity via a minimal graded free
resolution both in \textsc{Macaulay2} (M2) and \textsc{Singular} \cite{DGPS}
(S). Furthermore, we compare with the algorithm of Bermejo and Gimenez
\cite{BGSCMR}. This method does not require the computation of a free
resolution, and is implemented in the \textsc{Singular} package
\textsc{mregular.lib} \cite{GLP2} (BG-S) and the \textsc{Macaulay2} package
\textsc{Regularity} \cite{SeSt} (BG-M2). For comparability we obtain the
toric ideal $I_{B}$ always through the program \textsc{4ti2} \cite{4ti2},
which can be called optionally in our implementation (using
\cite{4ti2interface}).
We give the average computation times over $n$ examples generated by the
function \texttt{randomSemigroup($\alpha$,d,c,num=%
$>$%
n,setSeed=%
$>$%
true)}. Starting with the standard random seed, this function generates $n$
random semigroups $B\subseteq\mathbb{N}^{d}$ such that

\begin{itemize}
\item $\dim K[B] = d$.

\item $\codim K[B]=c$; that is the number of generators of $B$ is $d+c$.

\item Each generator of $B$ has coordinate sum equal to $\alpha$.
\end{itemize}

All timings are in seconds on a single $2.7$ GHz core and $4$ GB of RAM. In
the cases marked by a star at least one of the computations ran out of memory
or did not finish within $1200$ seconds. Note that the computation of
$\reg I_{g}$ in step $4$ of Algorithm~\ref{algo2} could easily be
parallelized. This is not available in our \textsc{Macaulay2} implementation
so far.

The next table shows the comparison for $K=\mathbb{Q}$, $d=3$, $\alpha=5$, and
$n=15$ examples.%
\[%
\begin{tabular}
[c]{lccccccccc}%
$c$ & \multicolumn{1}{|c}{$1$} & $2$ & $3$ & $4$ & $5$ & $6$ & $7$ & $8$ &
$9$\\\hline
MA & \multicolumn{1}{|c}{$.073$} & $.089$ & $.095$ & $.10$ & $.13$ & $.14$ &
$.14$ & $.19$ & $.16$\\
M2 & \multicolumn{1}{|c}{$.0084$} & $.0089$ & $.011$ & $.017$ & $.043$ & $.10$
& $.45$ & $2.8$ & $21$\\
S & \multicolumn{1}{|c}{$.0099$} & $.0089$ & $.011$ & $.013$ & $.020$ & $.046$
& $.18$ & $1.1$ & $6.8$\\
BG-S & \multicolumn{1}{|c}{$.016$} & $.030$ & $.19$ & $1.2$ & $15$ & $24$ &
$59$ & $44$ & $77$\\
BG-M2 & \multicolumn{1}{|c}{$.036$} & $.053$ & $.47$ & $1.8$ & $9.0$ & $19$ &
$34$ & $39$ & $43$\\
&  &  &  &  &  &  &  &  & \\
$c$ & \multicolumn{1}{|c}{$10$} & $11$ & $12$ & $13$ & $14$ & $15$ & $16$ &
$17$ & $18$\\\hline
MA & \multicolumn{1}{|c}{$.21$} & $.26$ & $.22$ & $.26$ & $.29$ & $.30$ &
$.31$ & $.36$ & $.47$\\
M2 & \multicolumn{1}{|c}{$180$} & $\ast$ & $\ast$ & $\ast$ & $\ast$ & $\ast$ &
$\ast$ & $\ast$ & $\ast$\\
S & \multicolumn{1}{|c}{$30$} & $\ast$ & $\ast$ & $\ast$ & $\ast$ & $\ast$ &
$\ast$ & $\ast$ & $\ast$\\
BG-S & \multicolumn{1}{|c}{$170$} & $520$ & $\ast$ & $\ast$ & $\ast$ & $\ast$
& $360$ & $460$ & $350$\\
BG-M2 & \multicolumn{1}{|c}{$85$} & $150$ & $140$ & $250$ & $310$ & $290$ &
$300$ & $410$ & $320$%
\end{tabular}
\ \
\]

For small codimension $c$ the decomposition approach has slightly higher
overhead than the traditional algorithms. For larger codimensions, however,
both the resolution approach in \textsc{Macaulay2} and \textsc{Singular} and
the Bermejo-Gimenez implementation in \textsc{Singular} fail. The average
computation times of the \textsc{Regularity} package increase significantly,
whereas those for Algorithm~\ref{algo2} stay under one second. The traditional
approaches become more competitive when considering the same setup over the
finite field $K=\mathbb{Z}/101$, but are still much slower than
Algorithm~\ref{algo2}:%

\[%
\begin{tabular}
[c]{lccccccccc}%
$c$ & \multicolumn{1}{|c}{$1$} & $2$ & $3$ & $4$ & $5$ & $6$ & $7$ & $8$ &
$9$\\\hline
MA & \multicolumn{1}{|c}{$.072$} & $.088$ & $.093$ & $.10$ & $.12$ & $.13$ &
$.13$ & $.19$ & $.16$\\
M2 & \multicolumn{1}{|c}{$.0075$} & $.0095$ & $.010$ & $.013$ & $.020$ &
$.032$ & $.090$ & $.40$ & $2.8$\\
S & \multicolumn{1}{|c}{$.0067$} & $.010$ & $.011$ & $.015$ & $.023$ & $.041$
& $.16$ & $.99$ & $6.3$\\
BG-S & \multicolumn{1}{|c}{$.017$} & $.020$ & $.031$ & $.052$ & $.094$ & $.12$
& $.18$ & $.34$ & $.42$\\
BG-M2 & \multicolumn{1}{|c}{$.030$} & $.037$ & $.064$ & $.14$ & $.34$ & $.48$
& $.80$ & $1.5$ & $2.0$\\
&  &  &  &  &  &  &  &  & \\
$c$ & \multicolumn{1}{|c}{$10$} & $11$ & $12$ & $13$ & $14$ & $15$ & $16$ &
$17$ & $18$\\\hline
MA & \multicolumn{1}{|c}{$.21$} & $.25$ & $.22$ & $.25$ & $.29$ & $.29$ &
$.31$ & $.35$ & $.39$\\
M2 & \multicolumn{1}{|c}{$26$} & $\ast$ & $\ast$ & $\ast$ & $\ast$ & $\ast$ &
$\ast$ & $\ast$ & $\ast$\\
S & \multicolumn{1}{|c}{$28$} & $250$ & $\ast$ & $\ast$ & $\ast$ & $\ast$ &
$\ast$ & $\ast$ & $\ast$\\
BG-S & \multicolumn{1}{|c}{$.57$} & $.88$ & $.88$ & $1.1$ & $1.4$ & $1.5$ &
$1.7$ & $2.5$ & $2.4$\\
BG-M2 & \multicolumn{1}{|c}{$3.3$} & $4.4$ & $4.4$ & $6.4$ & $7.9$ & $7.8$ &
$9.2$ & $12$ & $13$%
\end{tabular}
\]
\vspace{0.02cm}

Note that over a finite field there may not exist a homogeneous linear
transformation such that the initial ideal is of nested type, see for example
\cite[Remark 4.9]{BGSCMR}. This case is not covered and hence does not
terminate in the implementation of the Bermejo-Gimenez algorithm in the
\textsc{Regularity} package. In the standard configuration the package \textsc{mregular.lib} can handle this
case, but then does not perform well over a finite field in our setup. Hence
we use its alternative option, which takes the same approach as the
\textsc{Regularity} package and applies a random homogeneous linear transformation.

Increasing the dimension to $d=4$ we compare our implementation with the most
competitive one, that is, \textsc{mregular.lib} ($K=\mathbb{Z}/101$,
$\alpha=5$, $n=1)$. Here also the \textsc{Singular} implementation of the
Bermejo-Gimenez algorithm fails:%
\vspace{0.02cm}
\[%
\begin{tabular}
[c]{lccccccccccccc}%
$c$ & \multicolumn{1}{|c}{$4$} & $8$ & $12$ & $16$ & $20$ & $24$ & $28$ & $32$
& $36$ & $40$ & $44$ & $48$ & $52$\\\hline
MA & \multicolumn{1}{|c}{$.13$} & $.31$ & $3.8$ & $13$ & $.69$ & $2.2$ &
$1.7$ & $1.9$ & $1.5$ & $4.4$ & $6.0$ & $8.9$ & $13$\\
BG-S & \multicolumn{1}{|c}{$.61$} & $2.2$ & $46$ & $150$ & $380$ & $840$ &
$940$ & $\ast$ & $\ast$ & $\ast$ & $\ast$ & $\ast$ & $\ast$%
\end{tabular}
\]
\vspace{0.02cm}

To illustrate the performance of Algorithm~\ref{algo2} we present the
computation times ($K=\mathbb{Z}/101$, $n=1$) of our implementation for $d=3$
and various $\alpha$ and $c$:%
\vspace{0.02cm}
\[%
\begin{tabular}
[c]{c|ccccccccccccc}%
$\alpha\backslash c$ & $4$ & $8$ & $12$ & $16$ & $20$ & $24$ & $28$ & $32$ &
$36$ & $40$ & $44$ & $48$ & $52$\\\hline
$3$ & $.083$ &  &  &  &  &  &  &  &  &  &  &  & \\
$4$ & $.073$ & $.10$ & $.24$ &  &  &  &  &  &  &  &  &  & \\
$5$ & $.11$ & $.13$ & $.15$ & $.22$ &  &  &  &  &  &  &  &  & \\
$6$ & $.11$ & $.31$ & $.21$ & $.22$ & $.27$ & $.75$ &  &  &  &  &  &  & \\
$7$ & $.10$ & $.16$ & $.18$ & $.24$ & $.29$ & $.86$ & $1.0$ & $1.4$ &  &  &  &
& \\
$8$ & $.11$ & $.22$ & $.26$ & $.31$ & $.35$ & $.54$ & $.67$ & $.85$ & $1.2$ &
$3.6$ &  &  & \\
$9$ & $.13$ & $.25$ & $.31$ & $.38$ & $.56$ & $.64$ & $.77$ & $.98$ & $1.4$ &
$3.8$ & $5.7$ & $8.6$ & $13$%
\end{tabular}
\
\]
\vspace{0.02cm}
The following table is based on a similar setup for $d=4$:%
\vspace{0.02cm}
\[%
\begin{tabular}
[c]{c|cccccccccc}%
$\alpha\backslash c$ & $8$ & $16$ & $24$ & $32$ & $40$ & $48$ & $56$ & $64$ &
$72$ & $80$\\\hline
$3$ & $.18$ & $.51$ &  &  &  &  &  &  &  & \\
$4$ & $.26$ & $.32$ & $.54$ &  &  &  &  &  &  & \\
$5$ & $.31$ & $13$ & $2.2$ & $1.9$ & $4.4$ & $8.9$ &  &  &  & \\
$6$ & $9.6$ & $120$ & $\ast$ & $\ast$ & $3.4$ & $7.8$ & $15$ & $36$ & $66$ &
$120$%
\end{tabular}
\]
\vspace{0.02cm}

Obtaining the regularity via Algorithm~\ref{algo2} involves two main
computations - decomposing $K[B]$ into a direct sum of monomial ideals
$I_{g}\subseteq K[A]$ via Algorithm~\ref{algo1} and computing a minimal graded
free resolution for each $I_{g}$. The computation time for the first task is
increasing with the codimension. On the other hand the complexity of the
second task grows with the cardinality of $\hilb(A)$, which tends to be small
for big codimension. This explains the good performance of the algorithm for
large codimension observed in the table above. In particular the simplicial
case shows an impressive performance as illustrated by the following table for
simplicial semigroups with $d=5$ and $\alpha=5$ (same setup as before). The
examples are generated by the function \texttt{randomSemigroup} using the
option \texttt{simplicial=>true}.%

\vspace{0.02cm}
\[%
\begin{tabular}
[c]{c|cccccccccccc}%
$c$ & $10$ & $20$ & $30$ & $40$ & $50$ & $60$ & $70$ & $80$ & $90$ & $100$ &
$110$ & $120$\\\hline
MA & $13$ & $13$ & $17$ & $32$ & $69$ & $86$ & $110$ & $170$ & $250$ & $400$ &
$650$ & $1000$%
\end{tabular}
\]
\vspace{0.02cm}

In case of a homogeneous  semigroup ring of dimension $2$ the ideals
$I_{g}$ are monomial ideals in two variables. Hence we can read off
$\reg I_{g}$ by ordering the monomials with respect to the lexicographic order
(see, for example, \cite[Proposition\,4.1]{MNEGMC}). This further improves the
performance of the algorithm.\newline

Due to the good performance of Algorithm~\ref{algo2} we can actually do the
regularity computation for all possible semigroups $B$ in $\mathbb{N}^{d}$
such that the generators have coordinate sum $\alpha$ for some $\alpha$ and
$d$. This confirms the Eisenbud-Goto conjecture for some cases.

\begin{proposition}
\label{testegbis5} The regularity of $\mathbb{Q}[B]$ is bounded by
$\deg\mathbb{Q}[B]-\codim \mathbb{Q}[B]$, provided that the minimal generators
of $B$ in $\mathbb{N}^{d}$ have fixed coordinate sum $\alpha$ for $d=3$ and
$\alpha\leq5$, for $d=4$ and $\alpha\leq3$, as well as for $d=5$ and
$\alpha=2$.
\end{proposition}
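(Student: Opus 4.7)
The plan is to verify the bound by exhaustive computation. For each of the pairs $(d,\alpha)$ listed, I would enumerate all homogeneous semigroups $B\subseteq\mathbb{N}^{d}$ whose Hilbert basis consists of lattice points of coordinate sum $\alpha$ and such that $\dim\mathbb{Q}[B]=d$ (equivalently, $C(B)$ is full-dimensional), then apply Algorithm~\ref{algo2} to compute $\reg\mathbb{Q}[B]$ and compare with $\deg\mathbb{Q}[B]-\codim\mathbb{Q}[B]$. Here $\deg\mathbb{Q}[B]=\#G\cdot\deg\mathbb{Q}[A]$ by Proposition~\ref{regcomp}(2), and $\codim\mathbb{Q}[B]=\#\hilb(B)-d$.

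First I would generate the finite set $L(d,\alpha):=\{v\in\mathbb{N}^{d}\mid v_{1}+\cdots+v_{d}=\alpha\}$, whose cardinality is $\binom{d+\alpha-1}{\alpha}$; explicitly $21$, $20$, and $15$ in the three cases, so the set of candidate subsets $S\subseteq L(d,\alpha)$ is enumerable. For each such $S$ I would check that (i) $S$ is a Hilbert basis (no element of $S$ lies in the semigroup generated by the others, tested by a single membership check per element), and (ii) the rational cone spanned by $S$ has dimension $d$. To reduce the workload, I would quotient by the obvious $\mathfrak{S}_{d}$-action permuting coordinates, since this action preserves $\reg$, $\deg$, and $\codim$; only one representative per orbit needs to be tested.

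For each surviving representative, I would invoke Algorithm~\ref{algo2}: choose a subset $\{e_{1},\ldots,e_{d}\}\subseteq\hilb(B)$ with $C(\{e_{1},\ldots,e_{d}\})=C(B)$, decompose $\mathbb{Q}[B]\cong\bigoplus_{g\in G}I_{g}(-h_{g})$ via Algorithm~\ref{algo1}, compute $\reg I_{g}$ via a minimal graded free resolution of $I_{g}$ over $\mathbb{Q}[x_{1},\ldots,x_{d}]$, and take the maximum of $\reg I_{g}+\deg h_{g}$. Simultaneously $\deg\mathbb{Q}[B]=\#G\cdot\deg\mathbb{Q}[A]$ is read off from the decomposition, and $\codim\mathbb{Q}[B]$ from the Hilbert basis. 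The inequality $\reg\mathbb{Q}[B]\leq\deg\mathbb{Q}[B]-\codim\mathbb{Q}[B]$ is then a boolean check.

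The main obstacle is purely computational: the number of subsets of $L(d,\alpha)$ is $2^{21}$ in the worst case ($d=3$, $\alpha=5$), and even after orbit reduction under $\mathfrak{S}_{d}$ the number of Hilbert bases to test is substantial. The timings in Section~\ref{sec simplicial homogeneous case} show that each individual regularity computation using Algorithm~\ref{algo2} takes a small fraction of a second in the relevant range of $(d,\alpha,c)$, so the full enumeration is feasible; the engineering challenge is organizing the enumeration to prune early (rejecting non-minimal or cone-deficient subsets as soon as possible) and to exploit the symmetry group to avoid recomputing isomorphic cases. Once the enumeration terminates without producing a counterexample, the proposition is established.
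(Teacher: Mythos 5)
Your proposal is exactly the paper's approach: the paper proves the proposition by exhaustively computing $\reg\mathbb{Q}[B]$, $\deg\mathbb{Q}[B]$, and $\codim\mathbb{Q}[B]$ via Algorithm~\ref{algo2} for all semigroups with generators of coordinate sum $\alpha$ in the stated ranges, and simply cites the resulting data posted at \cite{BEN}. Your write-up just spells out the enumeration and symmetry-reduction details that the paper leaves implicit.
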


\begin{proof}
The list of all generating sets $\hilb(B)$ together with $\reg \mathbb{Q}[B]$,
$\deg\mathbb{Q}[B]$, and $\codim \mathbb{Q}[B]$ can be found under the link
given in \cite{BEN}.
\end{proof}

Figure \ref{projr34} depicts the values of $\deg\mathbb{Q}[B]-\codim\mathbb{Q}%
[B]$ plotted against $\reg\mathbb{Q}[B]$ for all semigroups with $\alpha=3$
and $d=4$. For the same setup Figure \ref{egbar34} shows $\reg\mathbb{Q}[B]$
on top of $\codim\mathbb{Q}[B]$ plotted against $\deg\mathbb{Q}[B]$. The line
corresponds to the projection of the plane $\reg\mathbb{Q}[B]-\deg
\mathbb{Q}[B]+\codim\mathbb{Q}[B]=0$. Figures for the remaining cases can be
found at \cite{BEN}.

\begin{figure}[ptb]
\begin{center}
\includegraphics[trim=0cm 0cm 0cm 0cm,
height=3.6in,
width=3.2in
]{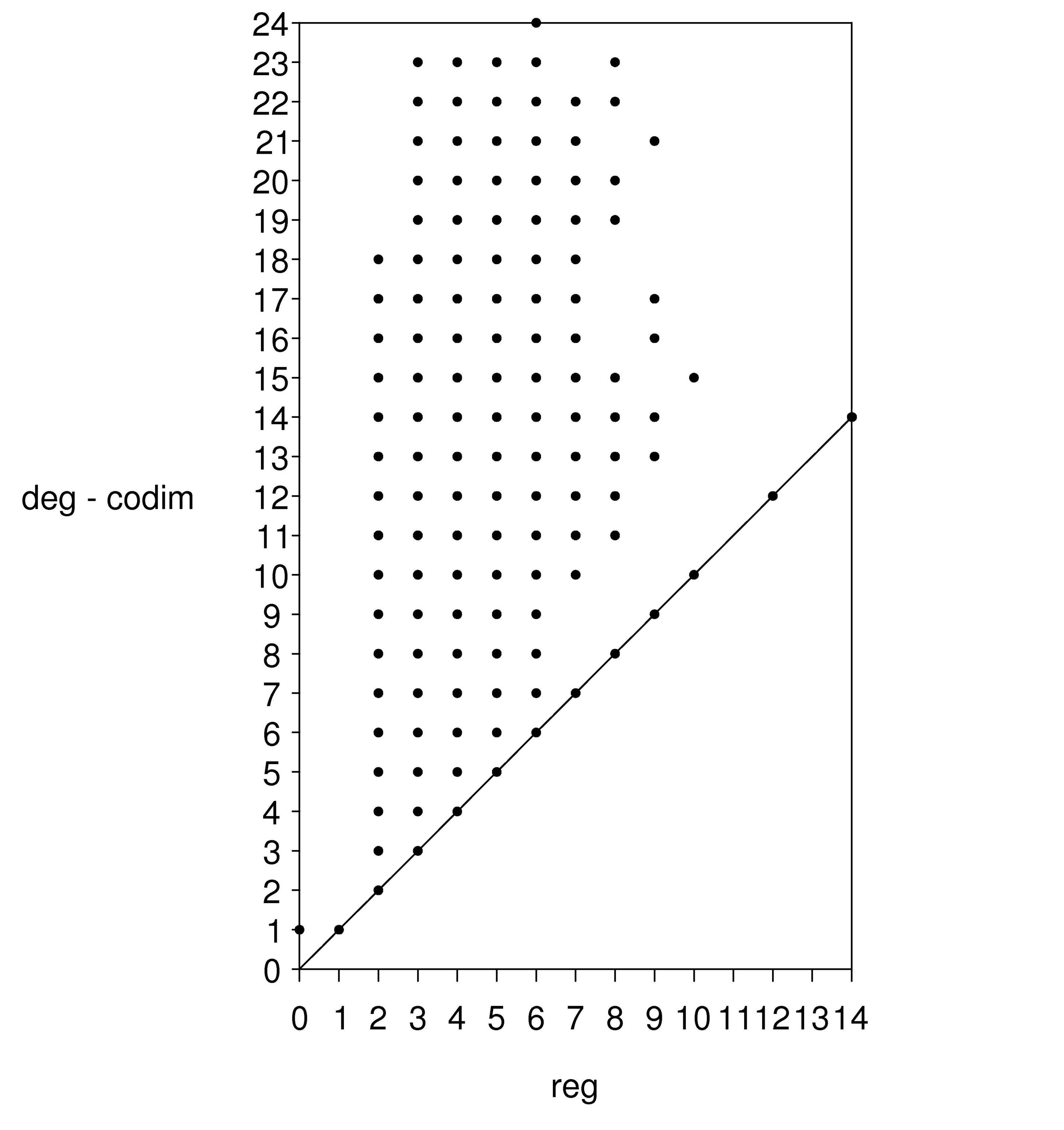}
\end{center}
\caption{$\deg\mathbb{Q}[B] -\codim\mathbb{Q}[B] $ against $\reg\mathbb{Q}[B]
$ for $\alpha=3$ and $d=4$.}%
\label{projr34}%
\end{figure}

\begin{figure}[ptb]
\begin{center}
\includegraphics[trim=0cm 0cm 0cm 0cm,
height=3.5in,
width=4.75in
]{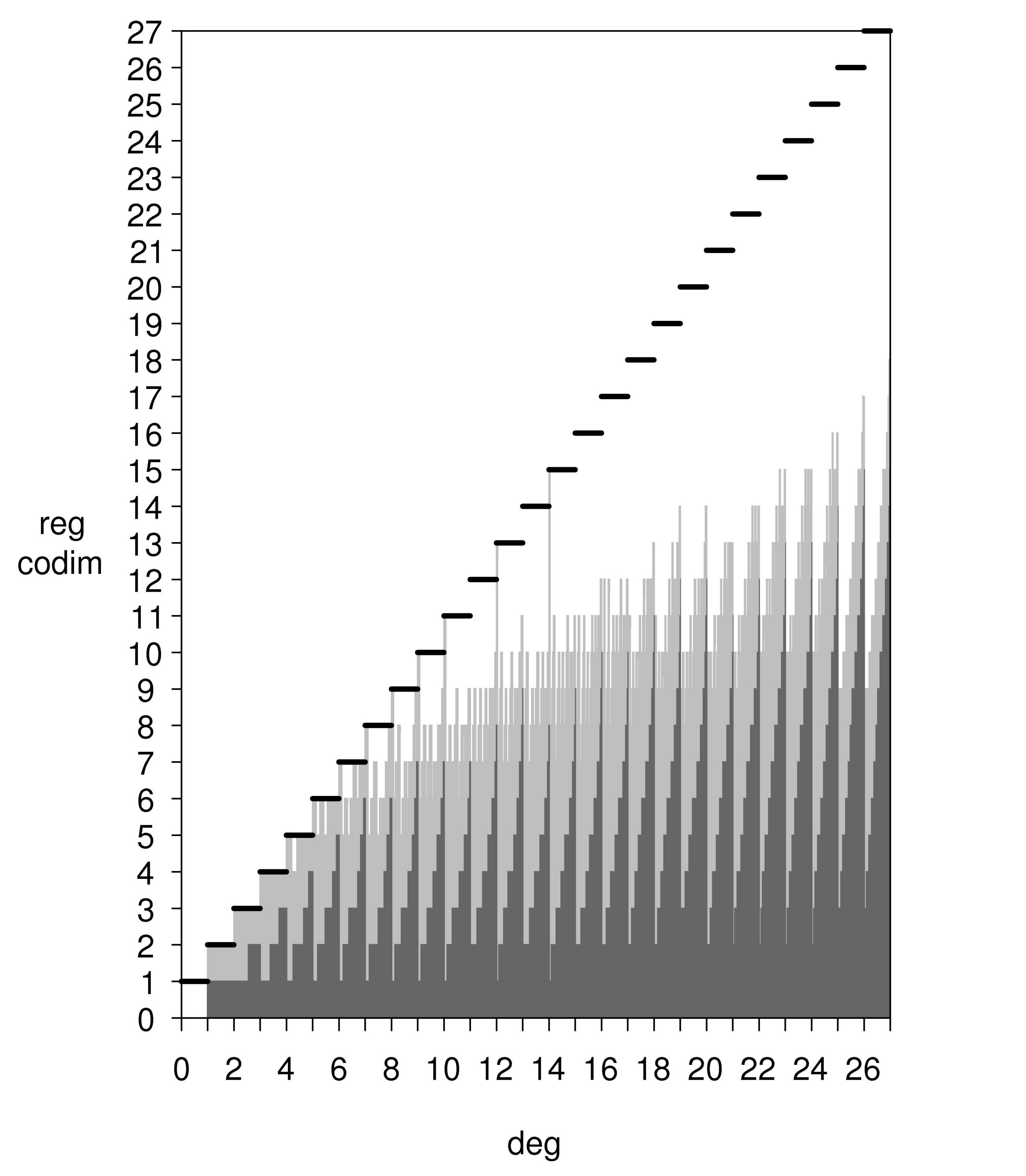}
\end{center}
\caption{$\reg\mathbb{Q}[B] +\codim\mathbb{Q}[B] $ against $\deg\mathbb{Q}[B]
$ for $\alpha=3$ and $d=4$.}%
\label{egbar34}%
\end{figure}

\end{document}